\documentclass[12pt,a4paper]{amsart}

\usepackage{fourier}
\usepackage{ulem}
\usepackage{amsthm}
\usepackage{mathrsfs}
\usepackage{amsmath}
\usepackage[dvipsnames]{color} 
\usepackage[all]{xy}
\usepackage{xypic}
\usepackage[hidelinks]{hyperref}
\hypersetup{
  pdftitle   = {Deformations of Noncompact Calabi-Yau threefolds},
  pdfauthor  = {Gasparim et al.},
  pdfcreator = {\LaTeX\ with package \flqq hyperref\frqq}
}

\DeclareMathAlphabet{\mathpzc}{OT1}{pzc}{m}{it}

\newtheorem{theorem}{Theorem}[section]
\newtheorem*{theorem*}{Theorem}
\newtheorem{proposition}[theorem]{Proposition}
\newtheorem{lemma}[theorem]{Lemma}
\newtheorem*{lemma*}{Lemma}
\newtheorem{corollary}[theorem]{Corollary}
\newtheorem*{corollary*}{Corollary}

\newtheorem*{conjecture*}{Conjecture}

\theoremstyle{definition}
\newtheorem{definition}[theorem]{Definition}
\newtheorem{example}[theorem]{Example}
\newtheorem{notation}[theorem]{Notation}
\theoremstyle{remark}
\newtheorem{remark}[theorem]{Remark}

\newcommand{\ce}{\mathrel{\mathop:}=}

\DeclareMathOperator{\Ext}{Ext}

\DeclareMathOperator{\Tot}{Tot}
\DeclareMathOperator{\Proj}{Proj}
\DeclareMathOperator{\HH}{\mathrm{H}}

\numberwithin{equation}{section}

\begin{document}

\title[Deformations of Noncompact Calabi--Yau threefolds]{Deformations of Noncompact Calabi--Yau threefolds}

\author{E. Gasparim, T. K\"oppe, F. Rubilar,  {\tiny and} B. Suzuki}

\begin{abstract}
  We describe deformations of noncompact Calabi--Yau threefolds
  $W_k \ce \Tot(\mathcal{O}_{\mathbb{P}^1}(-k) \oplus \mathcal{O}_{\mathbb{P}^1}(k-2))$
  for $k=1,2,3$. We compute deformations concretely by calculations of $\HH^1(W_k, TW_k)$ via \v{C}ech cohomology.
  We show that for each $k=1,2,3$ the associated structures
  are qualitatively different, and we also comment on their difference from the analogous
  structures of simpler noncompact twofolds $\Tot(\mathcal{O}_{\mathbb{P}^1}(-k))$.
 \end{abstract}

\maketitle

\section{Motivation}
\noindent
 Our motivation to study deformations of Calabi--Yau threefolds comes from mathematical physics.
 In fact, deformations of complex structures of Calabi--Yau threefolds enter as terms of  the integrals defining
 the action of the theories of Kodaira--Spencer gravity \cite{B}. As we shall see, in general our 
 threefolds will have infinite-dimensional deformation spaces, thus allowing for rich applications.
  
We consider
smooth  Calabi--Yau threefolds $W_k$ containing a line $\ell \cong \mathbb{P}^1$.
For the applications we have in mind for future work it will be useful to observe the effect of contracting 
the line to a singularity. 
The existence of a contraction of $\ell$ imposes heavy restrictions on
the normal bundle \cite{ji}, namely $N_{\ell/W}$ must be isomorphic to
one of
\[ \text{(a) \ } \mathcal{O}_{\mathbb{P}^1}(-1) \oplus \mathcal{O}_{\mathbb{P}^1}(-1)
   \text{ ,}\quad \text{(b) \ } \mathcal{O}_{\mathbb{P}^1}(-2) \oplus \mathcal{O}_{\mathbb{P}^1}(0)
   \text{ , \ or \ } \text{(c) \ } \mathcal{O}_{\mathbb{P}^1}(-3) \oplus \mathcal{O}_{\mathbb{P}^1}(+1)
   \text{ .} \]
Conversely, Jim\'enez \cite{ji} states that if $\mathbb{P}^1 \cong \ell \subset W$
is any subspace of a smooth threefold $W$ such that $N_{\ell/W}$ is
isomorphic to one of the above, then:
\begin{itemize}
\item in (a) $\ell$ always contracts, 
\item in (b) either $\ell$ contracts or it moves, and 
\item in case (c) there exists an example in which $\ell$ does not contract
      nor does any multiple of $\ell$ (i.e.\ any scheme supported on $\ell$) move.
\end{itemize}

$W_1$ is the space appearing in the {basic flop}. Let $X$ be the cone
over the ordinary double point defined by the equation $xy - zw = 0$
on $\mathbb{C}^4$. The basic flop is described by the diagram:
\begin{equation}\label{eq.flop}
 \xygraph{  !{<0cm,0cm>;<1cm,0cm>;<0cm,1cm>::}
   !{(0,1.2)}*+{W}="t"
   !{(1,0)}*+{W_1^+}="l"
   !{(-1,0)}*+{W_1^-}="r"
   !{(0,-1.2)}*+{X}="b"
   !{(0,0)}="m"
   "t":"l"^{p_2} "t":"r"_{p_1} 
   "r":"b"_{\pi_1} "l":"b"^{\pi_2} "l":@{<--}"r"|\hole "t":"b"
   } 
\end{equation}
Here $W \ce W_{x,y,z,w}$ is the blow-up of $X$ at the vertex
$x=y=z=w=0$, $W_1^- \ce Z_{x,z}$ is the small blow-up of $X$ along
$x=z=0$ and $W_1^+ \mathrel{\mathop:}= Z_{y,w}$ is the small blow-up of $X$ along
$y=w=0$. 
The basic flop is the rational map from $W^{-}$ to
$W^{+}$. It is famous in algebraic geometry for being the first case of 
a rational map that is not a blow-up. 
 
 Thus, 
we will focus on  the Calabi--Yau cases 
\[ W_k \ce \Tot\bigl(\mathcal{O}_{\mathbb{P}^1}(-k) \oplus \mathcal{O}_{\mathbb{P}^1}(k-2)\bigr) \text{ for $k=1,2,3$.} \]
We will also consider surfaces of the form
\[ Z_k \ce \Tot\bigl(\mathcal O_{\mathbb P^1}(-k)\bigr) \]
for comparison in Sections \ref{sec.surfdeform} and \ref{sec.surfresult}.

\section{Statements of results}
\noindent
 We describe deformations of 
 complex surfaces and threefolds which are the total  spaces of vector bundles on the complex projective line $\mathbb{P}^1$.  We focus on the case of Calabi--Yau threefolds. 
 Even though there is no well established theory of deformations for noncompact manifolds, 
 we obtain deformations 
working by analogy with Kodaira theory for the compact case, see \cite{Ko}. Namely, we 
 calculate cohomology with  coefficients in the tangent bundle, and then proceed 
 to show that the directions of infinitesimal deformations parametrized by first cohomology are integrable, see 
 Section \ref{kod}.
 
In the case of  surfaces $Z_k$, with $ k>0$,  we prove that the deformations of the surfaces $Z_k$, described in \cite {BG}, can be obtained from the deformations
 of the Hirzebruch surfaces  $\mathbb{F}_k$, Lemma \ref{ZtDeformations}.

  Our results on deformations of the threefolds $W_k$ are as follows. We show that $W_1$ is formally rigid,
  Theorem \ref{W_1isrigid}, whereas $W_2$ has an infinite-dimensional deformation space,
   Theorem \ref{W_2iddef}. 
  Furthermore, we exhibit  a deformation  $\mathcal{W}_2$ of $W_2$  which turns out to be a non-affine manifold, a
  very different  case  from that of surfaces $Z_k $, $k>0$, where all the deformations are affine varieties. Finally, we give 
  an infinite-dimensional  family of deformations of $W_3$ which is not universal, but is semiuniversal, Corollary \ref{W_3sunivnouniv}. 
  The case $W_3$ is quite different from $W_1$, $W_2$, or the surfaces. 
  The tools used here to describe deformation spaces were not sufficient for $W_3$, therefore we must look for more effective techniques. 
  The cases $k\geq 3$  present similar features; we will continue their study in future work.

\section{Deformations of noncompact manifolds}\label{kod}

In this section we describe our methods to find infinitesimal deformations of  noncompact 
manifolds. 
When looking for deformations of noncompact manifolds one needs to keep in 
mind the caveat that cohomology calculations are generally not enough to decide 
questions of existence of infinitesimal deformations, as the following example illustrates. 

\begin{example} \label{ballico} Edoardo Ballico gave us the following illustration that cohomological rigidity does not imply absence of deformations. 

We consider deformations of $X= \mathbb C$. Clearly $\HH^1(X, TX)=0$.
However, there do exist nontrivial deformations of $X$ as the following family shows. 

Consider  $ \pi \colon  \mathbb P^1\times D \rightarrow D$  with $D$ any smooth manifold
 (even $\mathbb P^1$ or a disc) and a specific $ o\in D$.
Take $s_\infty \colon  D \to \mathbb P^1 \times D$ the section of $\pi$ defined by 
$$s_\infty(x) =(\infty, x) $$
then take another section $s$ of $\pi$ with 
$$s(o) =(\infty,o), \qquad s(x) =(a_x,x)$$ with $a_x \in \mathbb C = \mathbb P^1 \setminus \{\infty\}$.
Take as the total space for our family $Y $ as $\mathbb P^1 \times D$ minus the images of the two sections.
Then we obtain a deformation of $\mathbb C$ in which at all points of $D \setminus  \{o\}$ you have $\mathbb C\setminus \{0\}$.
Thus not a trivial deformation in any reasonable sense.
\end{example}

Hence, vanishing of cohomology does not imply nonexistence of deformations. Nevertheless, cohomology calculations are useful to find deformations. In this work by deformation we mean the following:
\begin{definition}\label{deformation}
	A {\it deformation} of a complex manifold $X$ is a holomorphic fiber bundle $\tilde{X}\stackrel{\pi}{\rightarrow}D$, where $D$ is a complex disc centered at $0$ (possibly a vector space, possibly infinite dimensional), satisfying:
	\begin{itemize}
		\item $\pi^{-1}(0)=X$,
		\item $\tilde{X}$ is trivial in the $C^{\infty}$ (but not necessarily in the holomorphic) category.
	\end{itemize}
\begin{remark}
	Our choice for the dimension of $D$ is $n=h^1(X,TX)$ whenever possible. The case $n=0$ corresponds to the following definition.
\end{remark}
\end{definition}

\begin{definition}\label{formal}
We call a manifold $X$ {\it formally rigid} when $\HH^1(X, TX)=0$.
\end{definition}

We show in \ref{w1} that $W_1$ is formally rigid. 

\begin{definition}\label{rigid}
We call a manifold $X$ {\it rigid} if any deformation 
$\tilde{X}\stackrel{\pi}{\rightarrow}D$ 
is biholomorphic to the trivial bundle $X \times D \to D$.
\end{definition}

In general, formally rigid does not imply rigid. 
With Definition \ref{deformation} we do not claim to solve the problem that a manifold $X$ does not deform under the condition $\HH^1(X,TX)=0$, however we eliminate some unwanted cases such as the one in Example \ref{ballico}.

Observe that the deformations considered in \cite{BG} satisfy Definition \ref{deformation}, hence maintain the $C^\infty$ type of the manifold. Moreover, for these surfaces, all deformations are affine.

We show that $\HH^1(W_2,TW_2) \neq 0$ and then prove that directions of deformations parametrized 
by such cohomology are integrable by explicitly constructing families. 
The details for other threefolds will 
remain for future work. 

Note that since $X$ is covered by 2 affine (Stein) open sets, second cohomologies with coherent coefficients vanish, 
hence there are no obstructions to deformations. 

\section{Comparison with  the deformation theory of  surfaces}\label{sec.surfdeform}
\noindent
Deformations of the surfaces $Z_k$ were described in \cite{BG}.
It turned out rather interestingly that the results we obtained for threefolds 
are not at all analogous to the ones for surfaces. 

Regarding applications to mathematical physics, the deformations of 
surfaces  turned out rather disappointing, because  instantons on $Z_k$ disappear 
under a small deformation of the base \cite[Thm.\thinspace 7.3]{BG}.
This resulted from the fact that deformations of $Z_k$ are affine varieties. 
The case of threefolds is a lot more promising, since for $k>1$,
$W_k$ has deformations which are not affine.

Nevertheless, deformations of the surfaces $Z_k$ turned out to have  an interesting application
to a question motivated by the Homological Mirror Symmetry conjecture. 
\cite[Sec.\thinspace 2]{BBGGS} showed that the adjoint orbit of $\mathfrak{sl}(2, \mathbb C)$ 
has the complex structure of the nontrivial deformation of $Z_2$ and used this structure to construct a 
Landau--Ginzburg model that does not have projective mirrors. Further applications to mirror symmetry give
us another motivation to study deformation theory for Calabi--Yau threefolds.

\section{$Z_k$, their bundles and deformations}\label{sec.surfresult}
\noindent
In this section we obtain properties about the surfaces $Z_k$ that will be used in the 
development of the theory of threefolds.
 
\subsection{A holomorphic bundle on \texorpdfstring{$Z_{(-1)}$}{Z\_(-1)} that is not algebraic}\label{bundleonZ1}

By definition  $Z_{(-1)}=\Tot(\mathcal{O}_{\mathbb{P}^1}(+1))$, and in
canonical coordinates $Z_{(-1)}=U\cup V$, where $U=\{(z,u)\}$ and  $V=\{(\xi,v)\}$, 
$U\cap V\cong \mathbb C^{\ast}\times \mathbb C$,
with change of coordinates given by:
\[ \boxed{(\xi,v)\mapsto(z^{-1},z^{-1}u)} \]

\begin{notation}
We denote by $\mathcal{O}_{Z_{(-1)}}(-j)=p^\ast(\mathcal{O}_{\mathbb{P}^1}(-j))$ the pullback bundle of $\mathcal{O}_{\mathbb{P}^1}(-2)$, where $p\colon Z_{(-1)} \to \mathbb{P}^1$ is the natural projection. 
\end{notation}

\begin{lemma}\label{cohZ(-1)}
$\HH^1(Z_{(-1)},\mathcal{O}_{Z_{(-1)}}(-2))$ is infinite-dimensional over $\mathbb{C}$. It consists of holomorphic functions of the form
\[
\sum_{l \leq -1} \sum_{i \geq 0} a_{li}z^lu^i
\]
such that $l+i+2 > 0$.
\end{lemma}

\begin{proof}
A 1-cocycle  $\sigma$  can be written in the form 
\[ \sigma=\sum_{i=0}^{+\infty}\sum_{l=-\infty}^{+\infty}\sigma_{i,l}z^lu^i \text{ .} \]
Since monomials containing nonnegative powers of  $z$ are holomorphic in $U$, these are coboundaries, thus
\[ \sigma\sim \sum_{i=0}^{+\infty}\sum_{l=-\infty}^{-1}\sigma_{i,l}z^lu^i \text{ ,} \]
where $\sim$ denotes cohomological equivalence.
Changing coordinates, we obtain
\[ T\sigma=z^{2}\sum_{i=0}^{+\infty}\sum_{l=-\infty}^{-1}\sigma_{i,l}z^lu^i = 
\sum_{i=0}^{+\infty}\sum_{l=-\infty}^{-1}\sigma_{i,l}z^{l+2}u^i  = \sum_{i=0}^{+\infty}\sum_{l=-\infty}^{-1}\sigma_{i,l}\xi^{-l-2-i}v^i  \text{ ,} \]
where terms satisfying $-l-2-i \geq 0$ are holomorphic on $V$.

Thus, the nontrivial terms on  $\HH^1(Z_{(-1)},\mathcal{O}_{Z_{(-1)}}(-2))$ are 

\[
\begin{array}{ccccc}
z^{-1} & z^{-1}u & z^{-1}u^2 & z^{-1}u^3 & \cdots \\
 & z^{-2}u & z^{-2}u^2 & z^{-2}u^3 & \cdots \\
 &  & z^{-3}u^2 & z^{-3}u^3 & \cdots \\ 
 &  &  & z^{-4}u^3 & \cdots \\
 & & & & \ddots
\end{array}
\]

\end{proof}

\begin{proposition}\label{nonalgebraicZ(-1)}
The bundle $E$ over $Z_{(-1)}$ defined  in canonical coordinates by the matrix 
\begin{equation}
 \left[\begin{array}{cc}
z^{1} & z^{-1}e^u\\
0 & z^{-1}
\end{array}\right] 
\label{matrix}
\end{equation}
is holomorphic but not algebraic.
\end{proposition}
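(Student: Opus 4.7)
Holomorphicity of $E$ is immediate: all entries of the matrix \eqref{matrix} are holomorphic on $U\cap V\cong\mathbb{C}^{\ast}\times\mathbb{C}$ (note $e^u$ is entire in $u$ and $z^{\pm 1}$ is holomorphic on $\mathbb{C}^{\ast}$), and its determinant equals $1$, so \eqref{matrix} defines an element of $GL_2\bigl(\mathcal{O}_{\mathrm{hol}}(U\cap V)\bigr)$, giving a holomorphic rank-$2$ bundle $E$.

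For the non-algebraicity, the plan is to read $E$ as a holomorphic extension of line bundles and to exhibit its extension class as a genuine infinite cohomology class. The upper-triangular form of \eqref{matrix} presents $E$ as an extension
\[ 0 \to \mathcal{O}(-1) \to E \to \mathcal{O}(1) \to 0, \]
whose class in $\Ext^1(\mathcal{O}(1),\mathcal{O}(-1)) \cong H^1(Z_{(-1)},\mathcal{O}(-2))$ is represented by the upper-right entry $z^{-1}e^u$. Using the power series $z^{-1}e^u=\sum_{i\geq 0}\tfrac{1}{i!}z^{-1}u^i$ and Lemma~\ref{cohZ(-1)}, this class has nonzero projection onto each of the infinitely many independent basis directions $[z^{-1}u^i]$, $i\geq 1$.

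If $E$ were holomorphically isomorphic to an algebraic rank-$2$ bundle $E'$ on $Z_{(-1)}$, then $E'$ would itself admit an analogous extension of line bundles whose class in $H^1(Z_{(-1)},\mathcal{O}(-2))$ is represented by an algebraic \v{C}ech cocycle---that is, a Laurent polynomial in $\mathbb{C}[z^{\pm 1},u]$---and hence a \emph{finite} sum in the basis of Lemma~\ref{cohZ(-1)}. This is incompatible with the infinite expansion above.

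The main technical obstacle is ensuring that the extension class is an isomorphism invariant of $E$. I would address this by showing that the sub-line-bundle $\mathcal{O}(-1)\subset E$ is intrinsic (unique up to scalar), e.g.\ by computing $\Hom(\mathcal{O}(-1),E)\cong\mathbb{C}$ directly from the transition matrix via the long exact sequence of $\Hom$ attached to the extension. A more hands-on alternative is to analyse the gauge equation $A\,T\,B^{-1}=T'$ with $A$ holomorphic on $U$, $B$ on $V$, and $T'$ algebraic: one shows that absorbing the $e^u$ factor into $A$ or $B$ would require an $e^{-u}$ to appear in the opposite chart. Since $u=\xi^{-1}v$ on $V$, the function $e^{-u}=e^{-\xi^{-1}v}$ has an essential singularity at $\xi=0$, contradicting the holomorphy of $B$ on $V$. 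This chart-incompatibility is the essential reason for non-algebraicity.
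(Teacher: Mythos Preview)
Your approach is essentially the paper's: identify $E$ with its extension class $z^{-1}e^u\in\Ext^1(\mathcal{O}(1),\mathcal{O}(-1))\cong H^1(Z_{(-1)},\mathcal{O}(-2))$, expand as a power series, and invoke Lemma~\ref{cohZ(-1)} to see infinitely many independent nonzero components, so the class is not representable by a polynomial. The paper simply stops there; you go further by flagging the question of whether the extension class is an isomorphism invariant of $E$ (a point the paper leaves implicit) and sketching two ways to handle it---either of your suggestions would suffice, though neither is fully carried out here.
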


\begin{proof}
This bundle $E$ 
can be represented by the element 
$$ z^{-1} e^u \in\Ext^1(\mathcal{O}_{Z_{(-1)}}(1),\mathcal{O}_{Z_{(-1)}}(-1))
\simeq  \HH^1(Z_{(-1)},\mathcal{O}_{Z_{(-1)}}(-2)).$$
We have 
\begin{equation}
\left[\begin{array}{cc}
z^{1} & z^{-1}e^u\\
0 & z^{-1}
\end{array}\right]=\left[\begin{array}{cc}
z^{1} & z\sigma\\
0 & z^{-1}
\end{array}\right]
\label{eq10}
\end{equation}
 with 
 $z^{-2} e^u = \sigma \in \HH^1(Z_{(-1)},\mathcal{O}_{Z_{(-1)}}(-2)),$ see \cite[p.\thinspace 234]{Har}. 
 Observe that
\begin{eqnarray*}
z^{-2} e^u &=& z^{-2} \left(1 + u + \frac{u^2}{2} + \dotsb + \frac{u^n}{n!} + \dotsb \right) \\
           &=& z^{-2} + \underbrace{z^{-2} \left(u + \frac{u^2}{2} + \frac{u^{3}}{6} + \dotsb +
               \frac{u^n}{n!} + \dotsb\right)}_{(\gamma)} \text{ ,}
\end{eqnarray*}
where the monomials in $\gamma$
represent pairwise distinct nontrivial classes in 
 $\HH^1(Z_{(-1)},\mathcal{O}_{Z_{(-1)}}(-2))$ as shown in Lemma \ref{cohZ(-1)}. 
 Consequently, the class $z\sigma \in \Ext^1(\mathcal{O}_{Z_{(-1)}}(1),\mathcal{O}_{Z_{(-1)}}(-1))$  
 corresponding to the bundle $E$  cannot be represented by a polynomial, hence 
 $E$ is holomorphic but not algebraic.
\end{proof}

\begin{corollary}\label{W3nonalg}
The threefold $W_3$ has holomorphic bundles that are not algebraic.
\end{corollary}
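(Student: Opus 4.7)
The plan is to lift the non-algebraic bundle of Proposition \ref{nonalgebraicZ(-1)} from the surface $Z_{(-1)}$ to a rank-$2$ bundle on $W_3$ and then reduce the claim to that proposition by restriction. The geometric key is that $Z_{(-1)} = \Tot(\mathcal{O}_{\mathbb{P}^1}(1))$ sits inside $W_3 = \Tot(\mathcal{O}_{\mathbb{P}^1}(-3) \oplus \mathcal{O}_{\mathbb{P}^1}(1))$ as a closed algebraic submanifold, namely the zero section of the $\mathcal{O}(-3)$-summand. Write canonical coordinates $(z, u_1, u_2)$ on the chart $U$ of $W_3$, where $u_1$ is the fiber coordinate of the $\mathcal{O}(-3)$-factor and $u_2$ is the fiber coordinate of the $\mathcal{O}(1)$-factor. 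Then the inclusion $\iota \colon Z_{(-1)} \hookrightarrow W_3$ is $(z,u) \mapsto (z,0,u)$, and the transition on $W_3$ restricts in $u_2$ to exactly the transition of $Z_{(-1)}$ used in Lemma \ref{cohZ(-1)}.

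Next I would define a rank-$2$ holomorphic bundle $\widetilde E$ on $W_3$ by the transition cocycle
\[ \left[\begin{array}{cc} z & z^{-1} e^{u_2} \\ 0 & z^{-1} \end{array}\right] \]
on $U \cap V$. This is a holomorphic invertible matrix on $U \cap V$ because only the coordinate $z$ (nonvanishing there) and $u_2$ appear, and with only two charts the cocycle condition is automatic. By construction $\iota^* \widetilde E$ is the bundle on $Z_{(-1)}$ defined by the same matrix, i.e.\ exactly the bundle $E$ of Proposition \ref{nonalgebraicZ(-1)}.

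To finish, I would invoke the fact that the pullback of an algebraic bundle along a closed algebraic immersion of algebraic varieties is again algebraic. If $\widetilde E$ were algebraic on $W_3$, then $\iota^* \widetilde E = E$ would be algebraic on $Z_{(-1)}$, contradicting Proposition \ref{nonalgebraicZ(-1)}. Hence $\widetilde E$ is holomorphic but not algebraic, proving the corollary.

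The only genuine subtlety is fixing charts on $W_3$ compatible with those on $Z_{(-1)}$ so that restriction is transparent and the extra fiber coordinate $u_1$ plays no role; once that is done, the result reduces cleanly to the surface case. A more direct alternative would be to compute $H^1(W_3, \mathcal{O}(-2))$ in Čech cohomology for the cover $\{U,V\}$ and show that the monomials $z^{-1} u_2^n/n!$ represent pairwise distinct nontrivial classes, so that $z^{-1} e^{u_2}$ cannot be polynomial modulo coboundaries; but the pullback argument is cleaner and isolates the essential content in the already-proved surface statement.
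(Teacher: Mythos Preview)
Your proof is correct and essentially identical to the paper's: the bundle $\widetilde E$ you write down is precisely $p^*E$ for the projection $p\colon W_3 \to Z_{(-1)}$, $(z,u_1,u_2)\mapsto (z,u_2)$, which is how the paper constructs it, and both arguments reduce non-algebraicity to Proposition~\ref{nonalgebraicZ(-1)} via the algebraic section $\iota$ of $p$. Your write-up is in fact more explicit than the paper's one-line proof about \emph{why} the lifted bundle cannot be algebraic.
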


\begin{proof}  Consider the map  $p\colon W_3 \rightarrow Z_{(-1)}$ given by projection on the  
first and third coordinates, that is, in canonical coordinates  as in (\ref{W3}) we see $Z_{(-1)}$ as 
cut out inside $W_3$ by the equation $u_1=0$. Then the pullback bundle $p^*E$ is holomorphic but 
not algebraic on $W_3$. In fact, the same proof works as in Proposition \ref{nonalgebraicZ(-1)}.
\end{proof}

\subsubsection{A similar bundle on $Z_1$}
It is instructive to verify the result of defining a bundle 
by the same matrix, but  over the surface $Z_1$ instead.
Recall that  $Z_1=U\cup V$,  with change of coordinates given by:
\[ \boxed{(\xi,v)\mapsto(z^{-1},zu)} \]
Consider the bundle  $E$ on $Z_1$, given by transition matrix
\begin{equation}
\left[\begin{array}{cc}
z^{1} & z^{-1}e^u\\
0 & z^{-1}
\end{array}\right].
\end{equation}
Note that this is the same matrix used in  (\ref{matrix}).
Thus $E$ 
 corresponds to the  element  $ z^{-1}e^u \in \Ext^1(\mathcal{O}_{Z_{1}}(1),\mathcal{O}_{Z_{1}}(-1))
 \simeq \HH^1(Z_1,\mathcal{O}_{Z_{1}}(-2)).$
Consequently, we may rewrite the transition function 
\begin{equation}\displaystyle
\left[\begin{array}{cc}
z^{1} & z^{-1}e^u\\
0 & z^{-1}
\end{array}\right]=\left[\begin{array}{cc}
z^{1} & z\sigma\\
0 & z^{-1}
\end{array}\right]
\label{eq2}
\end{equation}
where $z^{-2} u = \sigma\in H^1(Z_1,\mathcal{O}_{Z_{1}}(-2)).$
But $\sigma = \xi^3 v $ is holomorphic on the $V$ chart, and hence a coboundary. Thus 
$\sigma = 0 \in \HH^1(Z_1,\mathcal{O}_{Z_{1}}(-2))$,
and accordingly $z^{-1} e^u = 0 \in \Ext^1(\mathcal{O}_{Z_{1}}(1),\mathcal{O}_{Z_{1}}(-1))$.
Therefore the extension splits and 
$$E = \mathcal{O}_{Z_{1}}(-1) \oplus \mathcal{O}_{Z_{1}}(1) \text{.}$$

\subsection{Deformations of \texorpdfstring{$Z_k$}{Z\_k}}

\cite[Thm.\thinspace 5.3]{BG} construct a $(k-1)$-dimensional semiuniversal deformation space $\mathcal Z$  for $Z_k$  given by
\begin{equation}\label{N}
(\xi, v, t_1, \dotsc, t_{k-1})=(z^{-1}, z^ku + t_{k-1}z^{k-1} + \dotsb + t_1z  , t_1, \dotsc, t_{k-1}) \text{ .}
\end{equation}

We now prove that this family fits our definition of deformation.

\begin{lemma} \label{ZkCinf}
The deformation given by eq. \ref{N} is a $C^\infty$-trivial fiber bundle.
\end{lemma}

\begin{proof}
Note thar for any $C^\infty$ function $f \colon U \to \mathbb{C}$, the manifold given by gluing the charts $V=\mathbb{C}^2_{\xi,v}$ and $U=\mathbb{C}^2_{z,u}$ by
 \[
(\xi, v, t_1, \dotsc, t_{k-1})=(z^{-1}, z^ku + f(z,u))
\]
whenever $z \neq 0$ and $\xi \neq 0$ 
is diffeomorphic to $Z_k$.


We have that $z^{-1}$ and $z^ku$ is $C^\infty$. Then $u$ is $C^\infty$, as well as $\textnormal{Re}(u)$ and $\textnormal{Im}(u)$, respectively the real and imaginary parts of $u$. Hence  
\[
\frac{zu+z\bar{u}}{2\textnormal{Re}(u)} \textnormal{ and } \frac{zu-z\bar{u}}{2i\textnormal{Im}(u)}
\]
are $C^\infty$ and coincide with $z$ whenever $\textnormal{Re}(u)$ and $\textnormal{Im}(u)$ are not equal to 0, respectively. We define then 
\[
f(z,u)=
\begin{cases}
\dfrac{zu+z\bar{u}}{2\textnormal{Re}(u)}, \textnormal{Re}(u) \neq 0 \\ 
\dfrac{zu-z\bar{u}}{2i\textnormal{Im}(u)}, \textnormal{Im}(u) \neq 0 \\
z, u=0
\end{cases},
\]
which is $C^\infty$ on the intersection. Furthermore, $f$ coincides with $z$.

Whe conclude that $g(z,u)=t_{k-1}z^{k-1} + \dotsb + t_1z  , t_1, \dotsc, t_{k-1}$ is $C^\infty$.
\end{proof}

\begin{lemma} \label{ZtDeformations} Deformations of $Z_k$ can be obtained 
from deformations of\\  $ \mathbb{F}_k$. Thus, 
the family $\mathcal{Z}$ is  is not universal. 
\end{lemma}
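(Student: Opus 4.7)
The approach is to realize $Z_k$ as an open subvariety of the Hirzebruch surface $\mathbb{F}_k=\mathbb{P}(\mathcal{O}_{\mathbb{P}^1}\oplus\mathcal{O}_{\mathbb{P}^1}(-k))$, transport the classical semiuniversal family of $\mathbb{F}_k$ across this inclusion to recover the family $\mathcal{Z}$, and then read off non-universality from automorphisms acting on the open part.

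First I would cover $\mathbb{F}_k$ by the two standard bundle charts whose overlap carries coordinates $(z,u)$ and $(\xi,v)$ glued by $(\xi,v)=(z^{-1},z^{k}u)$, with a section at infinity $\{u=\infty\}=\{v=\infty\}$ compactifying the line bundle total space. Deleting that section leaves precisely $Z_k=\Tot(\mathcal{O}_{\mathbb{P}^1}(-k))$. A standard computation gives $H^1(\mathbb{F}_k,T\mathbb{F}_k)\cong\mathbb{C}^{k-1}$ for $k\geq 2$, with an explicit \v{C}ech basis $\{z^{i}\partial_{v}:1\leq i\leq k-1\}$ supported on the overlap. Perturbing the transition to
\[ (\xi,v)=(z^{-1},\, z^{k}u+t_{k-1}z^{k-1}+\cdots+t_{1}z) \]
and keeping the gluing of the section at infinity unchanged (the perturbation is polynomial in $z$ and linear in the fiber $u$, so it extends trivially across infinity) yields a semiuniversal family of $\mathbb{F}_k$. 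Restricting this family to the complement of the section at infinity in every fiber reproduces verbatim the family (\ref{N}) of \cite{BG}, proving the first assertion.

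For non-universality I would exhibit a positive-dimensional group of isomorphisms of $Z_k$ acting nontrivially on the base $\mathbb{C}^{k-1}$ of $\mathcal{Z}$. The torus action $(z,u)\mapsto(\lambda z,\mu u)$ is an automorphism of $Z_k$ (with corresponding action $(\xi,v)\mapsto(\lambda^{-1}\xi,\lambda^{k}\mu v)$ on the other chart), and a direct comparison of the perturbed transitions shows it sends the fiber of $\mathcal{Z}$ over $(t_{1},\ldots,t_{k-1})$ biholomorphically onto the fiber over $(\mu\lambda^{k-1}t_{1},\mu\lambda^{k-2}t_{2},\ldots,\mu\lambda\,t_{k-1})$. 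Hence distinct parameter values classify isomorphic deformations, so the classifying morphism to the base is not unique and $\mathcal{Z}$ fails to be universal, although the dimension count $k-1=\dim H^1(Z_k,TZ_k)$ shows it remains semiuniversal.

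The main technical obstacle is verifying that restriction of the versal $\mathbb{F}_k$-family to the noncompact open set $Z_k$ really is versal for $Z_k$, rather than losing or gaining deformation directions: concretely, that the restriction map $H^1(\mathbb{F}_k,T\mathbb{F}_k)\to H^1(Z_k,TZ_k)$ is an isomorphism onto the subspace spanned by the cocycles $\{z^{i}\partial_{v}:1\leq i\leq k-1\}$ that already appear in the description of $\mathcal{Z}$ given in \cite{BG}. Because all relevant cocycles are polynomial on the overlap, this reduces to a bookkeeping exercise in \v{C}ech cohomology rather than a deep obstruction-theoretic argument.
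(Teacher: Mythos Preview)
Your proposal is correct and follows the same broad strategy as the paper---realize $Z_k$ as an open piece of $\mathbb{F}_k$ and transport the deformation theory across---but the execution differs in two places.

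For the first assertion, the paper does not work with the bundle charts of $\mathbb{F}_k$. Instead it invokes Manetti's projective model of the semiuniversal family of $\mathbb{F}_k$ as an explicit subvariety $M\subset\mathbb{C}^{k-1}_t\times\mathbb{P}^1_l\times\mathbb{P}^{k+1}_x$, and then writes down a concrete fibrewise embedding $f\colon\mathcal{Z}\to M$ in coordinates, checking that each $\mathcal{Z}_t$ lands in the locus $\{x_0\neq 0\}$ of $M_t$ with complement the ``section at infinity''. Your \v{C}ech/chart description is more direct and avoids the auxiliary projective embedding, at the price of needing the extra remark (which you handle) that the affine perturbation of the fibre coordinate extends across the compactifying section.

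For non-universality the routes genuinely diverge. The paper argues that $M_t\cong M_{t'}$ iff $\mathcal{Z}_t\cong\mathcal{Z}_{t'}$, so $Z_k$ has exactly as many isomorphism classes of deformations as $\mathbb{F}_k$, namely $\lfloor k/2\rfloor$; since this is strictly smaller than the base dimension $k-1$, the family cannot be universal. Your argument via the torus action $(z,u)\mapsto(\lambda z,\mu u)$ is cleaner and more elementary---it directly exhibits the positive-dimensional automorphism group that obstructs universality---but it does not by itself recover the sharper enumeration of isomorphism classes that the paper extracts from the Hirzebruch classification.
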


\begin{proof}
We  compare deformations of the surfaces $Z_k$ with those of the Hirzebruch surfaces.
Choose coordinates
$(t_1, \dotsc, t_{k-1}, [l_0, l_1], [x_0, \dotsc, x_{k+1}])$
for the product  $\mathbb{C}_t^{k-1} \times \mathbb{P}^1_l \times \mathbb{P}^{k+1}_x$.
\cite[Chap.\thinspace II]{M} shows that the Hirzebruch surface  $\mathbb{F}_k$ has a $(k-1)$-dimensional
semiuniversal deformation space given by the smooth subvariety
$M \subset \mathbb{C}_t^{k-1} \times \mathbb{P}^1_l \times \mathbb{P}^{k+1}_x$
cut out  by the equations
\begin{equation}\label{M}
l_0(x_1, x_2, \ldots, x_k)=l_1(x_2 - t_1x_0, \ldots, x_k-t_{k-1}x_0, x_{k+1}) \text{ .}
\end{equation}

Let $\mathcal{Z}$ and $M$ denote the deformations given by \ref{N} and \ref{M}, respectively. 
Now consider the following map: 
\[
\begin{array}{rcl}
f\colon \mathcal{Z} & \to & M\\
(z, u, t_1, \ldots, t_{k-1}) & \mapsto & (t_1, \ldots, t_{k-1}, [1,z], [-1, z_1,\ldots , z_k, u])\\
(\xi, v, t_1, \ldots, t_{k-1}) & \mapsto & (t_1, \ldots, t_{k-1}, [\xi, 1], [-1, v, \xi_2, \ldots,\xi_{k+1} ]) 
\end{array}
\]
where we used the following notation:
\begin{align*}
z_1 &= z^k u + t_{k-1} z^{k-1} + \dotsb + t_1z & \xi_2 &= \xi v-t_1 \\
z_2 &= z^{k-1}u + t_{k-1} z^{k-2} + \dotsb + t_2z & \xi_3 &= \xi^2v - t_1\xi - t_2 \\
    &\ \;\vdots & &\ \;\vdots \\
z_{k-1} &= z^2u+t_{k-1}z & \xi_{k} &= \xi^{k-1}v - t_1\xi^{k-2} - \dotsb - t_{k-1}\\
z_k &= zu & \xi_{k+1} &= \xi^k v- t_1 \xi^{k-1} - \dotsb - t_{k-1}\xi
\end{align*}
It turns out that this map is injective and satisfies $f(\mathcal{Z}_t) \subset M_t$ for all $t \in \mathbb{C}^{k-1}$.
Notice that, for each $t \in \mathbb{C}^{k-1}$, we can  decompose $M_t$ as
\[
M_t = A_t \cup B_t \text{ ,}
\]
where $A_t = \{p \in M_t, x_0=0\}$ and $B_t = \{p \in M_t, x_0 \neq 0\}$. It then  follows that
\begin{itemize}
\item $B_t = f(\mathcal{Z}_t)$, and
\item $A_t$ is the boundary of $B_t$,
\end{itemize}
implying as a corollary that:
$M_t=M_{t'}$ if and only if $\mathcal{Z}_t=\mathcal{Z}_{t'}$.

So we conclude that each $Z_k$ has as many deformations as $\mathbb{F}_k$, specifically, $\lfloor k/2 \rfloor$. In particular, $\mathcal{Z}_k$ is not universal.
 \end{proof}

\section{Deformations of Calabi--Yau threefolds}


\subsection{Rigidity of \texorpdfstring{$W_1$}{W\_1}}
\begin{theorem}\cite{R}\label{w1}
$W_1$ is formally rigid. 
\label{W_1isrigid}
\end{theorem}

\begin{proof}
Formal infinitesimal deformations of complex structures are parameterised by first cohomology with coefficients 
in the tangent bundle. Direct calculation  of  \v{C}ech cohomology shows that $\HH^1(W_1, TW_1) = 0$.
Hence $W_1$ is formally rigid,  Definition \ref{formal}. 
\end{proof}

\subsection{Deformations of \texorpdfstring{$W_2$}{W\_2}}
Since we have $\HH^2(W_2,TW_2)=0$, we can make an analogy with unobstructed deformations in the compact case, where the theorem of existence \cite[Thm.\thinspace 5.6]{Ko} guaranties integrability of the cocycles in $\HH^1(W_2,TW_2)$. 
This theorem does not apply in the noncompact case. 
For the case that we consider, we will prove existence by explicitly constructing the corresponding manifold as Lemma \ref{integrableW2} shows.

It is possible to obtain some deformations using compactifications, in which case we can use the well developed theory of deformations from \cite{Ko}. However, given the results of Theorem \ref{W_2iddef}, infinetely many directions of deformations of $W_2$ would be lost if we worked with the compactification. Hence, we favor an approach using Definition \ref{deformation}.

	For instance, suppose we consider the compactification of $W_2$ given by:
	\[\overline{W}_2=\Proj\bigl(\mathcal O_{\mathbb P^1}(-2)\oplus\mathcal O_{\mathbb P^1}\oplus\mathcal O_{\mathbb P^1}\bigr).\]

	\begin{lemma} $\overline{W}_2$ has only two directions of deformation.
	\end{lemma}
	\begin{proof}
		The first cohomology group of  $\overline{W}_2$ is isomorphic to $\mathbb{C}^2$ as a vector space, that is, 
	$\HH^1(\overline{W}_2, T\overline{W}_2) =\mathbb{C}^2$. 
	\end{proof}

In fact, many non affine deformations would remain unfound with this method.

\begin{theorem}\label{W_2iddef}\cite{R}
$W_2$ has an infinite-dimensional family of deformations.
\end{theorem}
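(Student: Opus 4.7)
My plan is to invoke the Kodaira--Spencer correspondence and reduce the problem to showing that $H^1(W_2, TW_2)$ is infinite-dimensional, then realise the infinitesimal deformations as an actual analytic family by explicitly perturbing the transition data. Because the cover $\{U,V\}$ has only two charts, the cocycle condition on triple overlaps is vacuous, so any such perturbation automatically defines a family of complex structures.

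First I would compute the frame change for $TW_2$ on the overlap $U\cap V$. For $k=2$ the identification (\ref{identification}) reads $(\xi,v_1,v_2)=(z^{-1},z^2u_1,u_2)$, and a short calculation yields
\begin{equation*}
\partial_\xi=-z^2\partial_z+2zu_1\partial_{u_1}\text{,}\qquad
\partial_{v_1}=z^{-2}\partial_{u_1}\text{,}\qquad
\partial_{v_2}=\partial_{u_2}\text{.}
\end{equation*}
A \v{C}ech $1$-cochain for $TW_2$ is then any holomorphic vector field on $U\cap V$ (automatically a cocycle for a two-chart cover), and it is a coboundary exactly when it splits as $\sigma_V-\sigma_U$ with $\sigma_U$ and $\sigma_V$ extending holomorphically across $U$ and $V$ respectively.

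Second, I would expand a cochain in the $U$-frame as a sum of monomials $z^pu_1^au_2^b$ in each of the three directions, and track the two holomorphic extension conditions: $p\geq 0$ on the $U$-side, and $p\leq 2a+c$ on the $V$-side, where the shift $c$ depends on the component through the $z^{\pm 2}$ factors in the frame change. In the $\partial_z$ and $\partial_{u_2}$ directions the simultaneous failure inequalities are incompatible with $a\geq 0$, so these contribute no cohomology. In the $\partial_{u_1}$ direction, however, the attempted coboundary equation produces a $\xi^{-1}$ term on the $V$-side that cannot be absorbed by any admissible choice of $b_U$ or cross-term $2zu_1\tilde a$. This yields an infinite sequence of nontrivial, pairwise linearly independent classes $[z^{-1}u_2^b\partial_{u_1}]$, $b\geq 0$, whence $\dim_{\mathbb{C}} H^1(W_2, TW_2)=\infty$.

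To complete the argument I would then exhibit the explicit family with perturbed transition
\begin{equation*}
(\xi,v_1,v_2)=\Bigl(z^{-1},\,z^2u_1+\sum_{b\geq 0}t_b\,z\,u_2^b,\,u_2\Bigr)
\end{equation*}
and verify that the Kodaira--Spencer map sends $\partial/\partial t_b|_{t=0}$ to the class $[z^{-1}u_2^b\partial_{u_1}]$ up to sign; linear independence of these classes forces the parameter space to be genuinely infinite-dimensional. The hard part will be the \v{C}ech bookkeeping: since $\partial_\xi=-z^2\partial_z+2zu_1\partial_{u_1}$ couples the $\partial_z$ and $\partial_{u_1}$ components, the coboundary equations must be solved simultaneously rather than componentwise, and the main subtlety is to check that the cross term $2zu_1\tilde a$ cannot conspire with $b_U$ to cancel the $\xi^{-1}$ obstruction that distinguishes the surviving classes.
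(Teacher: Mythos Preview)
Your proposal is correct and follows essentially the same route as the paper: compute $H^1(W_2,TW_2)$ via \v{C}ech cohomology on the two-chart cover, identify the surviving classes as $(0,z^{-1}u_2^j,0)$ for $j\geq 0$, and then integrate them into the explicit family $(\xi,v_1,v_2)=(z^{-1},\,z^2u_1+\sum_{j\geq 0}t_j z u_2^j,\,u_2)$. The paper organises the coboundary analysis by applying the Jacobian transition matrix to a general cochain and reading off which monomials fail to extend over $V$, which automatically absorbs the cross term $2zu_1$ that you flag as the main subtlety; your componentwise argument with the coupled equations reaches the same conclusion, so there is no genuine difference in strategy.
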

\begin{proof}
The proof will follow from Lemmas \ref{tangentW2} and 
\ref{integrableW2} below. First we show that the first  cohomology with tangent coefficients is 
infinite-di\-men\-sional.
Then we show that its cocycles are integrable, and thus they parametrise deformations of $W_2$.
\end{proof}

\begin{lemma} \label{tangentW2}  $\HH^1(W_2,TW_2)$  is infinite dimensional over $\mathbb{C}$. If consists of holomorphic sections of the form
\[
\sum_{j \geq 0} \left[ \begin{array}{c}
0 \\ z^{-1}u_2^j \\ 0
\end{array} \right]
\]
 (written in canonical coordinates).
\end{lemma}

\begin{proof}
$W_2$ can be covered by
\[ U = \{(z,u_1,u_2)\} \quad\text{and}\quad V = \{(\xi, v_1, v_2)\} \text{ ,} \]
with $U \cap V = \mathbb C-\{0\} \times \mathbb C \times \mathbb C$ and transition function given by:
\[\boxed{ (\xi, v_1, v_2) = (z^{-1}, z^2u_1, u_2)} \]
We have then that the transition function for $TW_2$ is
\[ A= \left[ \begin{array}{ccc}
-z^{-2} & 0 & 0 \\
2zu_1 & z^2 & 0 \\
0 & 0 & 1
\end{array} \right]. \]
Let $\sigma$ be a 1-cocycle, i.e.\ a holomorphic function on $U \cap V$:
\[ \sigma = \sum_{j=0}^{\infty} \sum_{i=0}^{\infty}  \sum_{l=-\infty}^{\infty}
\left[ \begin{array}{c}
a_{lij} \\ b_{lij}\\ c_{lij}
\end{array} \right]
z^l u_1^i u_2^j \text{ .} \]
But
\[ \sum_{j=0}^{\infty} \sum_{i=0}^{\infty}   \sum_{l=0}^{\infty}
\left[ \begin{array}{c}
a_{lij} \\ b_{lij}\\ c_{lij}
\end{array} \right] 
z^l u_1^i u_2^j \]
is a coboundary, so 
\[ \sigma \sim \sum_{j=0}^{\infty} \sum_{i=0}^{\infty}  \sum_{l=-\infty}^{-1}
\left[ \begin{array}{c}
a_{lij} \\ b_{lij}\\ c_{lij}
\end{array} \right]
 z^l u_1^i u_2^j=\sigma',\]
where $\sim$ denotes cohomological equivalence. So
\begin{align*}
A \sigma' & =  \sum_{j=0}^{\infty} \sum_{i=0}^{\infty}  \sum_{l=-\infty}^{-1}
\left[ \begin{array}{c}
-a_{lij}z^{-2} \\ 2a_{lij}zu_1 + b_{lij}z^2\\ c_{lij}
\end{array} \right]
 z^{l} u_1^i u_2^j \\
  & = \sum_{j=0}^{\infty} \sum_{i=0}^{\infty}  \sum_{l=-\infty}^{-1}
\left[ \begin{array}{c}
-a_{lij}z^{-4} \\ 2a_{lij}z^{-3}(z^2u_1) + b_{lij}\\ c_{lij}z^{-2}
\end{array} \right]
 z^{2+l-2i} (z^2u_1)^i u_2^j \\
 & =  \sum_{j=0}^{\infty} \sum_{i=0}^{\infty}  \sum_{l=-\infty}^{-1}
\left[ \begin{array}{c}
-a_{lij}\xi^4 \\ 2a_{lij}\xi^3 v_1 + b_{lij}\\ c_{lij} \xi^2
\end{array} \right]
 \xi^{2i-l-2} v_1^i v_2^j. \\
\end{align*}
Except for the case where $l=-1$ and $i=0$, we have that $2i-l-2 \geq 0$, thus the corresponding 
monomials are holomorphic in $V$ and hence coboundaries.
It follows that
\begin{align*}
A \sigma' & \sim \sum_{j=0}^{\infty}
\left[ \begin{array}{c}
-a_j \xi^4 \\ 2a_j\xi^3 v_1 + b_j\\ c_j \xi^2
\end{array} \right]
\xi^{-1}v_2^j \\
& \sim \sum_{j=0}^\infty
\left[ \begin{array}{c}
0 \\ b_j\\ 0 
\end{array} \right] 
\xi^{-1}v_2^j,
\end{align*}
where we omit the indices $-1$ for $l$ and $0$ for $i$ for simplicity. 
We conclude then that $\HH^1(W_2, TW_2)$ is infinite-dimensional,
generated by the sections
\[
\sigma_j= \left[ \begin{array}{c}
0 \\ z^{-1}u_2^j\\ 0 
\end{array} \right]
\]
for $j \geq 0$.
\end{proof}

\begin{lemma}\label{integrableW2}
All cocycles in $\HH^1(W_2, TW_2)$ are integrable. 
\end{lemma}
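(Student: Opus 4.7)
The plan is to invoke the standard Kodaira--Spencer obstruction calculus and reduce integrability to the vanishing of $H^2(W_2, TW_2)$. Recall that a first-order deformation $[\sigma_1]\in H^1(W_2, TW_2)$ extends to a formal family $\sigma(t) = \sum_{n\geq 1}\sigma_n t^n$ satisfying the Maurer--Cartan equation precisely when the obstructions $\mathrm{ob}_n = \tfrac12 \sum_{i+j=n}[\sigma_i,\sigma_j]\in H^2(W_2, TW_2)$ all vanish, the bracket being the natural bracket on tangent-valued forms. So it is enough to prove $H^2(W_2, TW_2)=0$.

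For this I use the canonical two-chart cover $\mathcal U = \{U,V\}$ from the notation. Both $U$ and $V$ are biholomorphic to $\mathbb C^3$, and $U\cap V \cong \mathbb C^{\ast}\times\mathbb C^2$ is Stein as well. Since $TW_2$ is a locally free coherent analytic sheaf, Cartan's Theorem B gives $H^{i}(U, TW_2|_U) = H^{i}(V, TW_2|_V) = H^{i}(U\cap V, TW_2|_{U\cap V}) = 0$ for $i\geq 1$, so $\mathcal U$ is Leray for $TW_2$ and \v{C}ech cohomology with respect to $\mathcal U$ computes the sheaf cohomology. Since $\mathcal U$ has only two members, $\check C^2(\mathcal U, TW_2)=0$, whence $H^2(W_2, TW_2)=0$. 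Every obstruction therefore lies in the zero group, and every class in $H^1(W_2, TW_2)$ extends to a formal deformation.

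Since $W_2$ is noncompact, Kodaira's convergence theorem does not apply directly, so to pass from a formal to an honest analytic family (consistent with the explicit $\mathcal W_2$ advertised in the introduction) I would exhibit the integration on the generators. Taking the basis $\sigma_j=(0, z^{-1}u_2^j, 0)$ from Lemma \ref{tangentW2}, one modifies the transition (\ref{identification}) for $k=2$ to
\[
  (\xi, v_1, v_2) \;=\; \bigl(z^{-1},\ z^2 u_1 + \textstyle\sum_{j\geq 0} t_j\, z\, u_2^j,\ u_2\bigr),
\]
which glues $U$ and $V$ into a complex threefold for each parameter value $(t_j)$---the cocycle condition is automatic for a two-chart cover---and whose Kodaira--Spencer map at $t=0$ sends $\partial/\partial t_j$ back to the class $[\sigma_j]$ (after the same coordinate change that was used in Lemma \ref{tangentW2} to push $\sigma_j$ to the $V$-chart).

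The main obstacle I anticipate is not algebraic but analytic: the formal integration supplied by $H^2=0$ need not converge on noncompact $W_2$. The explicit construction above sidesteps this by being polynomial, hence manifestly convergent, in the parameters $(t_j)$; what remains is the routine task of verifying that the resulting patching is coherent over the infinite-dimensional parameter space and that the Kodaira--Spencer map is surjective onto $H^1(W_2, TW_2)$, both of which reduce to direct comparison with Lemma \ref{tangentW2}.
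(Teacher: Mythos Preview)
Your proposal is correct, and in fact its second half---the explicit perturbation of the transition function to
\[
  (\xi, v_1, v_2) = \bigl(z^{-1},\ z^2 u_1 + \textstyle\sum_{j\geq 0} t_j\, z\, u_2^j,\ u_2\bigr)
\]
---is exactly the paper's proof. The paper does nothing more: it writes the transition of $W_2$ in matrix form, recalls the generators $\sigma_j$ from Lemma~\ref{tangentW2}, and observes that adding $\sum_j t_j \sigma_j$ to the $u_1$-component yields a holomorphic change of coordinates for every choice of parameters, hence an honest deformation family whose Kodaira--Spencer map hits each $\sigma_j$.

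Where you differ is in the framing. You begin with the general Kodaira--Spencer obstruction calculus and deduce formal unobstructedness from $H^2(W_2, TW_2)=0$, which you obtain cleanly from the two-chart Stein cover. This is more conceptual and would apply verbatim to any $W_k$ (or indeed any manifold admitting a two-set Leray cover), whereas the paper's bare-hands construction is specific to the generators at hand. On the other hand, as you yourself note, the $H^2=0$ argument only buys formal integrability on a noncompact manifold, so you are forced back to the explicit construction to get an analytic family---at which point the obstruction-theoretic preamble becomes logically redundant. The paper simply omits that preamble. Your route is more illuminating about \emph{why} no obstructions arise; the paper's is shorter because the deformation happens to be linear in the parameters and visibly convergent.
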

\begin{proof}
We can write the transition function of $W_2$ as:
\[
\left[
\begin{array}{c}
\xi \\ v_1 \\ v_2
\end{array}
\right]
=
\left[
\begin{array}{c}
z^{-1} \\ z^2u_1 \\ u_2
\end{array}
\right]
=
\left[
\begin{array}{ccc}
z^{-2} & 0 & 0 \\
0 & z^2 & 0 \\
0 & 0 & 1
\end{array}
\right]
\left[
\begin{array}{c}
z \\ u_1 \\ u_2
\end{array}
\right].
\]
As we computed in Lemma \ref{tangentW2},  $\HH^1(W_2,TW_2)$ is generated by the sections 
\[
\left[
\begin{array}{c}
0 \\ z^{-1}u_2^j \\0
\end{array}
\right]
\]
for $j \geq 0$. Then we can  express the deformation family for $W_2$ as
\begin{align*}
\left[
\begin{array}{c}
\xi \\ v_1 \\ v_2
\end{array}
\right]
& =
\left[
\begin{array}{ccc}
z^{-2} & 0 & 0 \\
0 & z^2 & 0 \\
0 & 0 & 1
\end{array}
\right]
\left(
\left[
\begin{array}{c}
z \\ u_1 \\ u_2
\end{array}
\right]
+ \sum_{j \geq 0} t_j
\left[
\begin{array}{c}
0 \\ z^{-1}u_2^j \\ 0
\end{array}
\right]
\right) \\
& = 
\left[
\begin{array}{c}
z^{-1} \\ z^2u_1 + \sum_{j \geq 0} t_jzu_2^j \\ u_2
\end{array}
\right],
\end{align*}
i.e.\ we have an infinite-dimensional deformation family given by
\[ U = \mathbb{C}^3_{z,u_1,u_2} \times \mathbb{C}[t_j] \quad\text{and}\quad
   V = \mathbb{C}^3_{\xi,v_1,v_2} \times \mathbb{C}[t_j] \]
with
\[ (\xi,v_1,v_2, t_0, t_1, \ldots) = \left(z^{-1},z^2u_1 + \sum_{j \geq 0} t_jzu_2^j ,u_2, t_0, t_1, \dotsc\right) \]
on the intersection $U \cap V = (\mathbb{C}-\{0\}) \times \mathbb{C}^2 \times \mathbb{C}[t_j]$.
\end{proof}

The proof that this family is $C^\infty$ trivial is similar to the proof of Lemma \ref{ZkCinf}.

\subsubsection{A non-affine deformation}

The proof of Lemma \ref{integrableW2}
gives us that deformations of $W_2$ are  threefolds given by  change of coordinates  of the form
\[ (\xi, v_1, v_2) = \left(z^{-1}, z^2u_1 + \sum_{j \geq 0} t_j z u_2^j, u_2\right) \text{ .} \]
We consider now the example  $\mathcal W_2$  that occurs when $t_1=1$ and all $t_j$ vanish for $j \neq 1$, that is, 
the one with change of coordinates
\[
(\xi,v_1,v_2)=\left(z^{-1},z^2u_1 + zu_2  ,u_2\right) .
\]

\begin{lemma}
Let $\mathcal{O}_{\mathcal{W}_2}(-j) = p^\ast(\mathcal{O}_{\mathbb{P}}(-j))$ denote the pullback bundle of $\mathcal{O}_{\mathbb{P}}(-j)$, where $p \colon \mathcal{W}_2 \to \mathbb{P}$ is the natural projection. Then 
$\HH^1(\mathcal{W}_2, \mathcal{O}_{\mathcal{W}_2}(-4)) \neq 0$.
\end{lemma}

\begin{proof} Consider the 1-cocycle $\sigma$ written in the $U$ coordinate chart as 
$\sigma = z^{-1} $.
Suppose $\sigma $ is a coboundary, then we must have 
$$\sigma = \alpha + T^{-1} \beta$$ where $\alpha \in \Gamma(U)$ and $\beta = \Gamma(V)$.
Consequently
\[ z^{-1} =  \alpha(z,u_1,u_2) + z^{-4} \beta(z^{-1}, z^2u_1 + zu_2, u_2) \text{ .} \]
But $\alpha$ has only positive powers of $z$, and the highest power of 
$z$ appearing on $z^{-4} \beta$ is $-4$, hence 
the right-hand side has no terms in $z^{-1}$ and the equation is impossible, a contradiction.
\end{proof}

\begin{corollary}
$\mathcal W_2$ is not affine.
\end{corollary}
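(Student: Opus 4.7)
The plan is to deduce the corollary as a one-line consequence of the preceding Lemma by invoking the standard cohomological criterion for affineness. Recall that for any affine variety $X$, Serre's vanishing theorem asserts $H^i(X,\mathcal{F})=0$ for every coherent sheaf $\mathcal{F}$ and every $i \geq 1$; in the analytic category the same vanishing is supplied by Cartan's Theorem B on Stein spaces, should one prefer to regard $\mathcal{W}_2$ as a complex manifold rather than an algebraic variety. Either way, exhibiting a single coherent sheaf with nonvanishing $H^1$ suffices to rule out affineness.

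To apply this, I would simply note that the preceding Lemma already exhibits such a sheaf on $\mathcal{W}_2$, namely the line bundle $\mathcal{O}(-4)$. This bundle is well defined on the deformation because the change of coordinates $(\xi,v_1,v_2) = (z^{-1}, z^2 u_1 + z u_2, u_2)$ leaves the zero section $u_1 = u_2 = 0$ and the base $\mathbb{P}^1$ structure intact, so the transition cocycle for $\mathcal{O}_{\mathbb{P}^1}(-4)$ pulls back unchanged. By the Lemma, $H^1(\mathcal{W}_2,\mathcal{O}(-4)) \neq 0$, and the contrapositive of the vanishing theorem then forces $\mathcal{W}_2$ to be non-affine, which is the claim.

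The hard part, such as it is, has already been carried out inside the Lemma: the explicit cocycle $z^{-1}$ was shown not to arise as $\alpha + T^{-1}\beta$ by a direct comparison of the powers of $z$ on each side, exploiting the fact that the transition scales the fibre direction by $z^{-4}$ while $\alpha$ remains holomorphic in $U$. The corollary itself requires no further computation and is a purely formal consequence; the only care needed is to flag which vanishing theorem one is appealing to so that the argument reads uniformly in the algebraic and analytic settings considered in the paper.
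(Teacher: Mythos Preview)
Your proposal is correct and matches the paper's approach: the corollary is stated there without an explicit proof, being an immediate consequence of the preceding Lemma via Serre's vanishing theorem (or Cartan's Theorem~B), exactly as you argue. Your additional remarks about why $\mathcal{O}(-4)$ makes sense on the deformation and the recap of the cocycle computation are accurate but go beyond what the paper records; the paper simply takes the implication as evident.
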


\begin{remark}
Note that this result contrasts with the situation for surfaces, since  \cite[Thm.\thinspace 6.15]{BG} 
prove that all nontrivial deformations of $Z_k$ are af\-fine. 
\end{remark}

\begin{remark}
The referee pointed out that all deformations of $W_2$ such that $t_0=0$ are affine since they contain a $\mathbb{P}^1$.
\end{remark}

\subsection{Deformations of \texorpdfstring{$W_3$}{W\_3}}
\noindent
We start by computing the group $\HH^1(W_3,TW_3)$ which parametrises formal infinitesimal deformations of $W_3$.
Recall that $W_3$ can be covered by $U=\{(z,u_1,u_2)\}$ and $V=\{(\xi, v_1, v_2)\}$, 
with $U \cap V = \mathbb C-\{0\} \times \mathbb C^2$ and transition function given by:
\begin{equation}\label{W3}
\boxed{(\xi, v_1, v_2) = (z^{-1}, z^3u_1, z^{-1}u_2)}
\end{equation}

\begin{theorem}
There is a formal versal deformation space $\mathcal{W}$  for $W_3$  parametrised by cocycles of the form
$$
\left[
\begin{array}{cccr}
a_{lij} \\ 
b_{lij} \\
c_{lij}
\end{array}
\right] z^l u_1^i u_2^j 
\qquad  3i-3-l-j<0.
$$
\end{theorem}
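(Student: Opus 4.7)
The plan is to imitate the two-lemma strategy used for $W_2$ (Lemmas~\ref{tangentW2} and~\ref{integrableW2}): first compute $H^1(W_3, TW_3)$ via \v{C}ech cohomology on the canonical cover $\{U, V\}$, next exhibit these cocycles as first-order tangents to an honest family of deformations, and finally read off versality from the surjectivity of the Kodaira--Spencer map together with explicit integrability.

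For the \v{C}ech step I would start from the transition Jacobian
\[ A = \left[\begin{array}{ccc} -z^{-2} & 0 & 0 \\ 3z^2 u_1 & z^3 & 0 \\ -z^{-2} u_2 & 0 & z^{-1} \end{array}\right] \]
of $TW_3$, expand a cochain $\sigma \in TW_3(U \cap V)$ as a Laurent series $\sum (a_{lij}, b_{lij}, c_{lij})^T z^l u_1^i u_2^j$, and restrict to $l \leq -1$ to kill the $U$-coboundaries. Applying $A$ and substituting $z = \xi^{-1}$, $u_1 = \xi^3 v_1$, $u_2 = \xi^{-1} v_2$, each monomial $(a_{lij}, b_{lij}, c_{lij})^T z^l u_1^i u_2^j$ produces contributions in $V$-coordinates with $\xi$-exponents $2-l+3i-j$ in Row~1 (from $a$), $1-l+3i-j$ and $-3-l+3i-j$ in Row~2 (from $a$ and $b$ respectively), and $1-l+3i-j$ twice in Row~3 (from $a$ and $c$). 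The most negative of these is the $b$-contribution $-3-l+3i-j$, which fails to be holomorphic in $V$ precisely when $3i-3-l-j<0$; this pins down the parameter range.

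For integrability I would follow Lemma~\ref{integrableW2} and assemble the cocycle directly into the deformed transition,
\[ (\xi, v_1, v_2) = (z^{-1}, z^3 u_1, z^{-1} u_2) + A \cdot \sum_{3i-3-l-j<0} \left[\begin{array}{c} a_{lij} \\ b_{lij} \\ c_{lij} \end{array}\right] z^l u_1^i u_2^j \text{ ,} \]
checking that each summand remains a biholomorphism on the overlap so that the charts glue to a family $\mathcal{W}$ of complex manifolds over an infinite-dimensional affine base. Versality then follows because the Kodaira--Spencer map of $\mathcal{W}$ surjects onto $H^1(W_3, TW_3)$ by the first step and no obstruction survives thanks to the explicit integration.

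The hardest part will be the \v{C}ech bookkeeping: because the first column of $A$ couples the three components, each $a_{lij}$-term spawns three distinct $V$-monomials, which partially cancel against the contributions of $b_{l-1, i+1, j}$ in Row~2 and of $c_{l-1, i, j+1}$ in Row~3. Carefully disentangling these interactions is what ultimately shows that the weakest inequality $3i-3-l-j<0$---coming from the $b$-contribution in Row~2---governs the full parameter range, while the extra $a$- and $c$-freedom on the same range yields an over-complete but still versal presentation, consistent with the anticipated failure of universality recorded in Corollary~\ref{W_3sunivnouniv}.
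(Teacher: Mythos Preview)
Your proposal is correct and follows essentially the same \v{C}ech-cohomology approach as the paper: reduce to $l\le -1$, push through the transition matrix, and read off the surviving condition $3i-3-l-j<0$ from the $b$-row. Two small differences are worth noting: the paper first replaces the Jacobian by an isomorphic upper-triangular form
\[
T \simeq \begin{pmatrix} z^{-1} & 0 & -z^{-2}u_2 \\ 0 & z^3 & 3z^2 u_1 \\ 0 & 0 & -z^{-2} \end{pmatrix},
\]
which removes exactly the first-column coupling you flag as ``the hardest part'' and makes the bookkeeping cleaner; and the paper's own proof stops at the cocycle computation, whereas you go on to sketch integrability and versality in the style of Lemma~\ref{integrableW2}, which is a welcome addition rather than a deviation.
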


\begin{proof}
In canonical coordinates, the transition  matrix for the tangent bundle  $TW_3$ is given by
\begin{equation}\label{transitionW3}
T = \left[ \begin{matrix}
-z^{-2} & 0 & 0 \\
3 z^2 u_1 & z^3 & 0 \\
-z^{-2}u_2 & 0 & z^{-1}
\end{matrix} \right] \simeq 
\left[ \begin{matrix}
z^{-1} & 0 & -z^{-2}u_2 \\
0  & z^3 & 3 z^2 u_1 \\
0 & 0 & -z^{-2}
\end{matrix} \right], 
\end{equation}
where $\simeq$ denotes isomorphism, and the latter expression is handier for calculations. 
A 1-cocycle can be expressed in $U$ coordinates in the form 
\begin{align*}
\sigma & = \sum_{j=0}^\infty  \sum_{i=0}^\infty   \sum_{l=-\infty}^\infty
\left[
\begin{array}{c}
a_{lij} \\ 
b_{lij} \\
c_{lij}
\end{array}
\right] z^l u_1^i u_2^j \\
& \sim \sum_{j=0}^\infty  \sum_{i=0}^\infty   \sum_{l=-\infty}^{-1}
\left[ 
\begin{array}{c}
a_{lij} \\ 
b_{lij} \\
c_{lij}
\end{array}
\right] z^l u_1^i u_2^j, 
\end{align*}
where $\sim$ denotes cohomological equivalence.
Changing coordinates we obtain
\begin{align*}
T \sigma  & = \sum_{j=0}^\infty  \sum_{i=0}^\infty  \sum_{l=-\infty}^{-1}
\left[ \begin{matrix}
a_{lij}z^{-1} - c_{lij}z^{-2}u_2\\
3a_{lij}z^2u_1 + b_{lij}z^3 \\
- c_{lij} z^{-2} \\
\end{matrix} \right] 
z^l u_1^i u_2^j\\
\end{align*}
where all terms inside the matrix are holomorphic on $V$ except for 
  $$
\left[ \begin{matrix}
 0 \\
b_{lij}z^3 \\
 0  \\
\end{matrix} \right] .
$$
These impose the condition for a cocycle to be  nontrivial.
Since we have
\[ z^3 z^l u_1^i u_2^j =  z^{l+3-3i+j}(z^3 u_1)^i (z^{-1}u_2)^j  = \xi^{3i-3-l-j}u_1^i u_2^j \text{ ,} \]
a nontrivial cocycle satisfies $3i - 3 - l - j < 0$.
\end{proof} 

We now give a partial description of deformations of $W_3$.

\begin{lemma}\label{W3cocycles}
The sections
\[
\sigma_1=\left[ \begin{array}{c}
0 \\ z^{-1} \\ 0
\end{array} \right]
\textnormal{ and }
\sigma_2=\left[ \begin{array}{c}
0 \\ z^{-2} \\ 0
\end{array} \right]
\]
are nonzero cocycles on $\HH^1(W_3,TW_3)$.
\end{lemma}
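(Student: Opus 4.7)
The plan is short: everything reduces to the criterion that was just established in the preceding theorem. Because $W_3$ is covered by the two charts $U$ and $V$, the \v{C}ech complex for this cover has no 2-cochains, so every section of $TW_3$ on $U\cap V$ is automatically a 1-cocycle. Hence only the nonvanishing in $H^1(W_3,TW_3)$ requires proof.

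Next I would read off the indices $(l,i,j)$ of each section and plug them into the inequality $3i-3-l-j<0$ coming from the theorem. Both $\sigma_1$ and $\sigma_2$ have their only nonzero entry in the middle slot (the $b_{lij}$ position), with $(i,j)=(0,0)$ and $l=-1$, resp.\ $l=-2$. Substituting gives $-2<0$ for $\sigma_1$ and $-1<0$ for $\sigma_2$, so both cocycles satisfy the nontriviality condition.

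For a self-contained check (in case the reader wants confirmation without quoting the theorem), I would apply the transition matrix directly:
\[ T\sigma_1 = \bigl(0,\ z^{3}\cdot z^{-1},\ 0\bigr)^{\!\top} = (0,\xi^{-2},0)^{\!\top}, \qquad T\sigma_2 = (0,\xi^{-1},0)^{\!\top}. \]
The middle entries $\xi^{-2}$, $\xi^{-1}$ are not holomorphic on $V$, while the original expressions $z^{-1}$, $z^{-2}$ are not holomorphic on $U$; so neither cocycle admits a splitting $\sigma = \alpha|_{U\cap V}-\beta|_{U\cap V}$ with $\alpha\in\Gamma(U,TW_3)$ and $\beta\in\Gamma(V,TW_3)$, confirming that $[\sigma_1]$ and $[\sigma_2]$ are nonzero in cohomology.

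There is no genuine obstacle here; the only thing to be careful about is keeping the index conventions of the preceding theorem consistent when matching $(l,i,j)$ for each $\sigma_m$. Given that, the lemma is essentially an example illustrating the criterion just proved.
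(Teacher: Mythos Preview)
Your proposal is correct and contains the paper's own argument as its second half: the paper simply observes that $\sigma_l$ is not holomorphic on $U$, applies the transition matrix $T$, and notes that the result is not holomorphic on $V$. Your additional first step---plugging $(l,i,j)=(-1,0,0)$ and $(-2,0,0)$ into the inequality $3i-3-l-j<0$ from the preceding theorem---is a legitimate shortcut that the paper does not spell out, and it has the advantage of being a genuinely complete argument (merely checking that a cocycle is non-holomorphic on each chart separately does not by itself rule out a splitting $\alpha-\beta$; the full monomial analysis in the theorem is what closes that gap). Incidentally, your direct computation $T\sigma_1=(0,\xi^{-2},0)^{\!\top}$ and $T\sigma_2=(0,\xi^{-1},0)^{\!\top}$ is correct; the paper's printed exponent $\xi^{-l-3}$ is a typo for $\xi^{\,l-3}$.
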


\begin{proof}
Let
\[
\sigma_l = \left[ \begin{array}{c}
o \\ z^{-l} \\ 0
\end{array}\right],
\]
for $l=1, 2$. Then $\sigma_l$ is not a coboundary on the chart $U$.
We change coordinates by multiplying by the transition $T$ given in \ref{transitionW3},
\[
T \sigma_l = \left[ \begin{array}{c}
0 \\ z^{l+3} \\ 0
\end{array} \right]
= \left[ \begin{array}{c}
0 \\ \xi^{-l-3} \\ 0
\end{array} \right],
\]
which is not holomorphic on the chart $V$ and therefore not a coboundary.
\end{proof}

\begin{lemma}\label{W3deformation}
The following 2-parameter family of deformations of $W_3$ is contained in $\mathcal W$:
\[ (\xi, v_1,v_2)=(z^{-1},z^3u_1+t_2z^2+t_1z,z^{-1}u_2) \]
\end{lemma}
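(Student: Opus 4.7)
The plan is to exhibit the proposed family as the integral of an explicit cocycle combination built from the generators $\sigma_1,\sigma_2$ of Lemma \ref{W3cocycles}, and then to verify that this cocycle lies in the class $\mathcal W$ described by the inequality $3i-3-l-j<0$.

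Following the template of Lemma \ref{integrableW2} for $W_2$, I would first factor the proposed change of coordinates through the standard transition of $W_3$. Writing
\[ \begin{pmatrix}\xi\\ v_1\\ v_2\end{pmatrix}
  =\begin{pmatrix}z^{-2}&0&0\\ 0&z^3&0\\ 0&0&z^{-1}\end{pmatrix}
   \left(\begin{pmatrix}z\\ u_1\\ u_2\end{pmatrix}
   + t_1\begin{pmatrix}0\\ z^{-2}\\ 0\end{pmatrix}
   + t_2\begin{pmatrix}0\\ z^{-1}\\ 0\end{pmatrix}\right), \]
one checks directly that the middle component expands to $z^3u_1+t_1z+t_2z^2$, reproducing the proposed transition. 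This identifies the Kodaira--Spencer class of the family at $t=0$ with $t_1\sigma_2+t_2\sigma_1$ in the notation of Lemma \ref{W3cocycles}.

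Next, I would verify that both cocycles lie in $\mathcal W$ by reading off the indices from the defining condition $3i-3-l-j<0$ established in the preceding theorem. For $\sigma_1=(0,z^{-1},0)^{T}$ the relevant indices are $(l,i,j)=(-1,0,0)$, giving $3i-3-l-j=-2<0$; for $\sigma_2=(0,z^{-2},0)^{T}$ we have $(l,i,j)=(-2,0,0)$, giving $-1<0$. Together with Lemma \ref{W3cocycles}, which already certifies that $\sigma_1,\sigma_2$ are nonzero in $H^1(W_3,TW_3)$, this places the span of $\sigma_1,\sigma_2$ inside $\mathcal W$.

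It remains to check that the family is a bona fide holomorphic deformation, not merely a first-order direction. Since $W_3$ has only two charts, there are no triple-overlap conditions; I would just verify that on $U\cap V=(\mathbb C-\{0\})\times\mathbb C^2$ the map $(z,u_1,u_2)\mapsto(z^{-1},z^3u_1+t_2z^2+t_1z,z^{-1}u_2)$ is a biholomorphism for every $(t_1,t_2)$, with explicit inverse $z=\xi^{-1}$, $u_1=\xi^3v_1-t_1\xi^2-t_2\xi$, $u_2=\xi^{-1}v_2$. The only mildly subtle point is to observe that the perturbation vanishes identically on the zero-section and on triple overlaps (trivially), so integrability is immediate; this is why I expect the main effort of the proof to be merely the indexing check against $\mathcal W$ rather than any genuine PDE-type obstruction.
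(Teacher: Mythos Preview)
Your proof is correct and follows essentially the same approach as the paper: factor the proposed transition through the diagonal matrix to exhibit the family as the integration of $t_2\sigma_1+t_1\sigma_2$, then invoke Lemma~\ref{W3cocycles} to conclude these are nontrivial directions in $\mathcal W$. Your additional explicit checks of the inequality $3i-3-l-j<0$ and of the biholomorphism on the overlap are sound elaborations that the paper leaves implicit.
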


\begin{proof}
The transition for $W_3$ is given by, 
\[
(\xi, v_1, v_2)=(z^{-1}, z^3u_1, z^{-1}u_2).
\]
In matrix form:
\[
\left[ \begin{array}{c}
\xi \\ v_1 \\ v_2
\end{array} \right]
=
\left[ \begin{array}{ccc}
z^{-2} & 0 & 0 \\ 0 & z^3 & 0\\ 0 & 0 & z^{-1}
\end{array} \right]
\left[ \begin{array}{c}
z \\ u_1 \\ u_2
\end{array} \right].
\]
So we can construct a deformation family for $W_3$ using the cocycles from Lemma \ref{W3cocycles}:
\begin{align*}
\left[ \begin{array}{c}
\xi \\ v_1 \\ v_2
\end{array} \right]
& =
\left[ \begin{array}{ccc}
z^{-2} & 0 & 0 \\ 0 & z^3 & 0\\ 0 & 0 & z^{-1}
\end{array} \right]
\left(
\left[ \begin{array}{c}
z \\ u_1 \\ u_2
\end{array} \right]
 + 
t_2
\left[ \begin{array}{c}
0 \\ z^{-1} \\ 0
\end{array} \right]
+ t_1
\left[ \begin{array}{c}
0 \\ z^{-2} \\ 0
\end{array} \right]
\right) \\
& = 
\left[ \begin{array}{c}
z^{-1} \\ z^3u_1 + t_2z^2 + t_1z \\ z^{-1}v_2
\end{array} \right]
\end{align*}
Now it suffices to observe that, by Lemma \ref{W3cocycles},
$\sigma_1$ and $\sigma_2$ are nontrivial directions in $\mathcal W$. 
\end{proof}

\begin{corollary} The family presented in Theorem  \ref{W3deformation} is formally semiuniversal but not universal.
\label{W_3sunivnouniv}\end{corollary}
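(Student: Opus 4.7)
The plan is to strengthen the versality of $\mathcal W$ established in the Theorem constructing $\mathcal W$ to semi-universality, and then to exhibit nontrivial automorphisms of $W_3$ that rule out universality.

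For semi-universality, the parameters of $\mathcal W$ are in bijection with a $\mathbb C$-basis of $H^1(W_3,TW_3)$, namely the classes of the monomial cocycles $z^l u_1^i u_2^j$ with $3i-3-l-j<0$. Hence the Kodaira--Spencer map at the central fibre is a bijection onto $H^1(W_3,TW_3)$, not merely a surjection. Combined with the vanishing of the obstruction space $H^2(W_3,TW_3)=0$, which follows from the fact that $W_3=U\cup V$ is a two-chart Leray cover by Stein opens, a Kuranishi-type argument shows that any deformation of $W_3$ over a test base pulls back from $\mathcal W$ by a morphism whose derivative at the base point is uniquely determined. This is the defining property of semi-universality.

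For the failure of universality I would exhibit nontrivial global sections of $TW_3$. The Euler field $u_1\,\partial/\partial u_1$ transforms correctly across the transition \eqref{W3}, since $v_1\,\partial/\partial v_1=u_1\,\partial/\partial u_1$ under $v_1=z^3u_1$, so $H^0(W_3,TW_3)\neq 0$, and by general deformation theory a semi-universal family can be universal only if $H^0$ of the tangent sheaf vanishes. Concretely, inside the 2-parameter subfamily of Lemma~\ref{W3deformation}, the automorphism $(z,u_1,u_2)\mapsto(z,\lambda u_1,u_2)$ with $\lambda\in\mathbb C^\ast$ identifies the fibre over $(t_1,t_2)$ with the fibre over $(\lambda^{-1}t_1,\lambda^{-1}t_2)$, so distinct parameter points yield isomorphic total spaces and no classifying morphism into $\mathcal W$ can be unique.

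The main obstacle is justifying the Kuranishi step in the noncompact, infinite-dimensional setting, where $H^1(W_3,TW_3)$ is of unbounded rank and the classical theorem does not directly apply. The cleanest remedy is to phrase the whole statement within the formal deformation functor on Artin local $\mathbb C$-algebras: at each finite order only finitely many of the parameters $t_{lij}$ are active, the vanishing $H^2(W_3,TW_3)=0$ removes all obstructions order by order, and Schlessinger's criteria together with the already computed Kodaira--Spencer isomorphism yield semi-universality of the formal family underlying $\mathcal W$. I would make this formal framework explicit and, arguing as in the proof of Lemma~\ref{integrableW2}, verify that each basis cocycle $z^l u_1^i u_2^j$ with $3i-3-l-j<0$ integrates, so that $\mathcal W$ is populated by honest deformations and not merely formal ones.
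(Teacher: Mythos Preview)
Your argument is correct but takes a genuinely different route from the paper's for non-universality. The paper does not invoke $H^0(W_3,TW_3)\neq 0$ or the scaling automorphism $(z,u_1,u_2)\mapsto(z,\lambda u_1,u_2)$. Instead it observes that the 2-parameter family of Lemma~\ref{W3deformation} is precisely the deformation family of $Z_3$ with an extra $\mathcal O_{\mathbb P^1}(1)$-direction adjoined, and then appeals to Lemma~\ref{ZtDeformations}: since $Z_3$, like $\mathbb F_3$, admits only $\lfloor 3/2\rfloor=1$ nontrivial deformation up to isomorphism, the two cocycle directions $\sigma_1$ and $\sigma_2$ of Lemma~\ref{W3cocycles} give isomorphic threefolds. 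This is a stronger redundancy than your scaling argument (which only identifies parameters lying on the same ray through the origin), but it depends on the surface-theoretic comparison with Hirzebruch surfaces developed earlier in the paper; your approach is self-contained and illustrates the general deformation-theoretic principle that nonvanishing $H^0$ of the tangent sheaf obstructs universality. On the semi-universality side, the paper's proof actually says nothing explicit; your treatment via the Kodaira--Spencer isomorphism, the vanishing $H^2(W_3,TW_3)=0$ from the two-chart Stein cover, and Schlessinger's criteria in the formal setting is considerably more careful than what the paper itself supplies.
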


\begin{proof}
As a consequence of Lemma \ref{W3deformation} and Corollary \ref{ZtDeformations}, we have that the deformations in the directions of the cocycles of Lemma \ref{W3cocycles} are isomorphic. Indeed, these deformations are induced by $Z_3$ which, as $\mathbb{F}_3$, only has one nontrivial direction of  deformation.
\end{proof}

\section*{Acknowledgements}
\noindent
We thank professor Edoardo Ballico for enlightening discussions. 

Our results  were presented by Gasparim and Suzuki at the Geometry and Physics session of the 
{\it V Congreso Latinoame\-ri\-cano de Matem\'aticas}. We thank 
 UMALCA, Universidad del Norte  and the Colombian Mathematical Society for the financial support and hospitality, 
 and
   Bernardo Uribe for the invitation to organise a session  as well as for  giving us the opportunity to submit a contribution to these proceedings.
 
Gasparim was partially supported by  the Vicerrector\'ia de Investicagi\'on y Desarrollo tecnol\'ogico at Universidad Cat\'olica del Norte (Chile).
Suzuki acknowledges support from  Beca Doctorado Nacional -- Folio 21160257.
Rubilar acknowledges support from  Beca Doctorado Nacional -- Folio 21170589.
The final version of this work was written during a visit to the ICTP of Rubilar and Gasparim under support of a 
Simmons Associateship grant.

\vfill\noindent
E.\ Gasparim, T.\ K\"oppe, F.\ Rubilar, B.\ Suzuki\\
Departamento de Matem\'aticas\\
Universidad Cat\'olica del Norte\\
Av. Angamos 0600\\
Antofagasta\\
Chile

\end{document}